\newcommand{\Z}{\mathbb{Z}}
\newcommand{\R}{\mathbb{R}}
\newcommand{\floor}[1]{\lfloor#1\rfloor}
\newcommand{\supp}{\operatorname{supp}}
\newcommand{\cone}{\operatorname{cone}}
\newcommand{\asy}{\text{asy}}
\newenvironment{cpl}{\begin{trivlist} \item[] {\em Proof of Lemma.}}{\hspace*{\stretch{1}} \qed \end{trivlist}}
\spnewtheorem*{exam}{Example}{\bfseries}{\itshape}
\title{Sparsity of integer solutions in the average case}
\author{Timm Oertel\inst{1}
\and Joseph Paat\inst{2}
\and Robert Weismantel\inst{2}}
\institute{
School of Mathematics, Cardiff University, United Kingdom\and
Institute for Operations Research, ETH Z\"urich, Switzerland}
\begin{document}
\maketitle
\begin{abstract}
We examine how sparse feasible solutions of integer programs are, on average.
Average case here means that we fix the constraint matrix and vary the right-hand side vectors.
For a problem in standard form with $m$ equations, there exist LP feasible solutions with at most $m$ many nonzero entries. 
We show that under relatively mild assumptions, integer programs in standard form have feasible solutions with $O(m)$ many nonzero entries, on average.
Our proof uses ideas from the theory of groups, lattices, and Ehrhart polynomials.
From our main theorem we obtain the best known upper bounds on the integer Carath\'eodory number provided that the determinants in the data are small.
\end{abstract}

\section{Introduction}

Let $m,n \in \Z_{\ge 1}$ and $A \in \Z^{m \times n}$.
We always assume that $A$ has full row rank.
We also view $A$ as a set of its column vectors.
So, $W \subseteq A$ implies that $W$ is a subset of the columns of $A$.

We aim to find a sparse integer vector in the set
\[
P(A,b) := \{x\in \Z^n_{\ge 0}: Ax = b\},
\]
where $b \in \Z^m$. 
That is, we aim at finding a solution $z$ such that $|\supp(z)|$ is as small as possible, where $\supp(x) := \{i \in \{1, \dotsc, m\} : x_i \neq 0\}$ for $x \in \R^n$. 
To this end, we define the \emph{support function of $(A,b)$} to be
\[
\sigma(A,b) := \min\{|\supp(z)| : z \in  P(A,b)\}.
\]
If $P(A,b) = \emptyset$, then $\sigma(A,b) := \infty$.
We define the \emph{support function of $A$} to be
\[
\sigma(A) := \max\{\sigma(A,b) :  b \in \Z^m \text{ and } \sigma(A,b) < \infty\}. 
\]

The question of determining $\sigma(A)$ generalizes problems that have been open for decades.
A notable special case is the so-called integer Carath\'eodory number, i.e. the minimum number of Hilbert basis elements in a rational pointed polyhedral cone required to represent an integer point in the cone.
We say that $A$ has the \emph{Hilbert basis property} if its columns correspond to a Hilbert basis of $\cone(A)$.
For $A$ with the Hilbert basis property, Cook et al.~\cite{CookFS1986} showed that $\sigma(A) \le 2m-1$ and Seb\H{o} showed that $\sigma(A) \le 2m - 2 $~\cite{Sebo1990}.
Bruns et al.~\cite{BrunsGHMW1999} provide an example of $A$ with the Hilbert basis property with $ \frac{7}{6} m \le \sigma(A) $.
However, for matrices with the Hilbert basis property, the true value of $\sigma(A)$ is unknown.

For general choices of $A$, Eisenbrand and Shmonin~\cite{ES2006} showed that $\sigma(A) \le 2m \log_2(4m\|A\|_{\infty})$, where $\|\cdot\|_{\infty}$ is the max norm.
Aliev et al.~\cite{ADOO2017} and Aliev et al.~\cite{ADEOW2018} improved the previous result and showed that
\begin{equation}\label{eqDetBound}
\textstyle
\sigma(A) \le m + \log_{2}(g^{-1}\sqrt{\det(A{\displaystyle A^{\intercal}})}) \le 2m\log_2(2\sqrt{m}\|A\|_{\infty}),
\end{equation}
where $g = \gcd\{|\det(B)|: B \text{ is an } m \times m \text{ submatrix of } A\}$.
It turns out that the previous upper bound is close to the true value of $\sigma(A)$.
In fact, for every $\epsilon > 0$, Aliev et al.~\cite{ADEOW2018} provide an example of $A$ for which $m \log_2(\|A\|_{\infty})^{1/(1+\epsilon)} \le \sigma(A)$. 

In this paper, we consider $\sigma(A,b)$ for \emph{most choices} of $b$. 
We formalize this `average case' using the \emph{asymptotic support function of $A$} defined by
\[
\sigma^{\asy}(A) := \min\bigg\{ k \in \Z : \lim_{t \to \infty} \frac
{|\{b \in \{-t, ..., t\}^m : \sigma(A,b) \le k\}|}
{|\{b \in \{-t, ..., t\}^m : P(A,b) \neq \emptyset\}|}
= 1\bigg\}.
\]
Note that $\sigma^{\asy}(A) \le \sigma(A) \le |A|$. 

The value $\sigma^{\asy}(A)$ can be thought of as the smallest $k$ such that almost all feasible integer programs with constraint matrix $A$ have solutions with support of cardinality at most $k$.
The function $\sigma^{\asy} ( \cdot )$ was introduced by Bruns and Gubeladze in~\cite{BG2004}, where it was shown that $\sigma^{\asy}(A) \le 2m-3$ for matrices with the Hilbert basis property.
In general, an average case analysis of the support question has not been provided in the literature. 
Average case behavior of integer programs has been studied in specialized settings, see, e.g.,~\cite{DF1989} for packing problems in $0,1$ variables and~\cite{IHO2017} for problems with only one constraint.
However, to the best of our knowledge, there are no other studies available that are concerned with the average case behavior of integer programs, in general.

Our analysis reveals that the \emph{sizes} of the $m \times m$ minors of $A$ affect sparsity. 
It turns out that \emph{the number of factors in the prime decomposition} of the minors also affects sparsity.
Moreover, for matrices with \emph{large} minors but \emph{few} factors, there exist solutions whose support depends on the number of factors rather than the size of the minors.
Recall that a prime is a natural number greater than or equal to $2$ that is divisible only by itself and $1$.
We now formalize these parameters related to the minors of a matrix.

Let $W \in \Z^{m \times d}$ be of full row rank, where $d \in \Z_{\ge 1}$.
Denote the set of absolute values of the $m \times m$ minors by
\[
\Delta(W) := \{|\det(W')|: W' \text{ is an invertible } m \times m \text{ submatrix of } W\},
\]
and denote the set of `number of prime factors' in each minor by
\begin{equation}\label{eqPhiDefn}
\Phi(W) :=  \bigg\{t \in \Z_{\ge 1}: 
\begin{array}{l}
W' \text{ an invertible } m \times m \text{ submatrix of } W, \text { and}\\
|\det(W')| = \prod_{i=1}^t \alpha_i \text{ with } \alpha_1, \dotsc, \alpha_t \text{ prime}
\end{array}
\bigg\}.
\end{equation}
If $\Phi(W)$ consists of only one element (e.g., when $W \in \Z^{m\times m}$), then we denote the element by $\phi(W)$.
If $W \in \Z^{m\times m}$ and $|\det(W)| = 1$, then $\phi(W) = 0$.
We denote the maximum and minimum of these sets by
\[
\begin{array}{rclcrcl}
\delta^{\max}(W) & := & \max ( \Delta(W)), & &\delta^{\min}(W) & := & \min ( \Delta(W)), \\[.1 cm]
\phi^{\max}(W) & := & \max ( \Phi(W)), & \quad \text{and} \quad & \phi^{\min}(W) & := & \min ( \Phi(W)).
\end{array}
\]
Our first main result bounds $\sigma^{\asy}$ using these parameters.
\begin{theorem}\label{thmMainProb}
Let $A \in \Z^{m\times n}$ and $W \subseteq A$ such that 
\(
\cone(A) = \cone(W).
\)
Then

\begin{enumerate}[(i)]
\item $\sigma^{\asy}(A) \le m + \phi^{\max}(W) \le m + \log_{2}\left(\delta^{\max}(W)\right)$,\\
\item $\sigma^{\asy}(A) \le 2m + \phi^{\min}(W) \le 2m + \log_{2}\left(\delta^{\min}(W)\right)$.
\end{enumerate}
\end{theorem}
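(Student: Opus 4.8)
The plan is to reduce the statement about $\sigma^{\asy}(A)$ to a counting statement about which right-hand sides $b$ admit sparse integer solutions, and then to exploit the lattice/group structure sitting inside each simplicial cone spanned by $m$ columns of $W$. Here is how I would proceed.

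First I would reduce to the case where $W$ itself is an $m \times m$ matrix. Since $\cone(A) = \cone(W)$ and $A$ has full row rank, almost every feasible $b$ (more precisely, every $b$ in the interior of the cone, which is a full-dimensional set and hence contains an asymptotically full proportion of integer points) lies in the cone spanned by some invertible $m \times m$ submatrix $B \subseteq W$; the remaining $b$ lie in a finite union of lower-dimensional cones and are asymptotically negligible. Up to this negligible set I may therefore fix such a $B$ and ask: for how many $b \in \cone(B)$ does $P(A,b)$ contain a vector with support of size at most $m + \phi(B)$ (for part (i)), or $2m + \phi(B)$ (for part (ii))? Crucially, using all of $A$'s columns — not just $B$'s — gives me extra freedom, since $\cone(B) = \cone(A)$ means I can combine a basic solution supported on $B$ with one or two extra columns from $A$.

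The heart of the argument is a statement about the quotient group $G := \Z^m / B\Z^m$, which has order $|\det(B)| = \prod_{i=1}^{\phi(B)} \alpha_i$. For a given $b$, the fractional part of the basic solution $B^{-1}b$ is governed by the image $\bar b$ of $b$ in $G$. If $\bar b = 0$ then $B^{-1} b$ is already an integer point and $\sigma(A,b) \le m$. Otherwise I want to write $b = Bx + (\text{a few columns of }A)$ with $x \in \Z^m_{\ge 0}$, which amounts to expressing $\bar b$ as a short nonnegative integer combination of the images in $G$ of the columns of $A$. Because $\cone(A) = \cone(B)$ and $A$ has full row rank, the images of $A$'s columns generate $G$; writing $|G|$ as a product of $\phi(B)$ primes and peeling off one prime factor at a time, one shows that every element of $G$ is reachable using a bounded number of generators — roughly $\phi(B)$ of them for part (i). This is where the number of prime factors, rather than the magnitude of the determinant, enters: a chain of subgroups $G = G_0 \supsetneq G_1 \supsetneq \dots \supsetneq G_{\phi(B)} = \{0\}$ with prime indices lets me correct $\bar b$ one step at a time, each step costing one extra nonzero entry. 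For part (ii), a cruder argument that does not require $b$ to sit inside $\cone(B)$ exactly but allows an overshoot absorbed by a second basic solution costs an extra $m$ but still only $\phi(B)$ beyond that — and the $\phi^{\min}$ rather than $\phi^{\max}$ appears because we get to pick the most favorable $B$ for each $b$, using Ehrhart-type counting to show the favorable cone still captures asymptotically all of $\Z^m \cap \cone(A)$ up to scaling.

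Finally I would assemble the asymptotic count. For fixed $B$, the set of $b$ for which the above construction succeeds is a union of translated copies of the lattice $B\Z^m$ (indexed by the reachable cosets) intersected with $\cone(B)$; by Ehrhart's theorem the number of such $b$ in $\{-t,\dots,t\}^m$ grows like $c \cdot t^m$ with the same leading coefficient as the full count $|\{b : P(A,b) \neq \emptyset\}|$, since every coset is in fact reachable. Taking the union over the finitely many choices of $B$ and discarding the lower-dimensional boundary contributions gives the limit $1$ in the definition of $\sigma^{\asy}$, with $k = m + \phi^{\max}(W)$ and $k = 2m + \phi^{\min}(W)$ respectively. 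The inequalities $\phi^{\max}(W) \le \log_2(\delta^{\max}(W))$ and $\phi^{\min}(W) \le \log_2(\delta^{\min}(W))$ are immediate since a product of $t$ primes is at least $2^t$. I expect the main obstacle to be the group-theoretic core — proving that every coset of $B\Z^m$ in $\Z^m$ is reachable as a nonnegative integer combination of at most $\phi(B)$ (resp.\ a controlled number of) columns of $A$ while keeping the combination inside $\cone(B)$ so the resulting $x$ is genuinely nonnegative. Handling the nonnegativity jointly with the congruence condition, rather than one after the other, is the delicate point.
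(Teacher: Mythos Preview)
Your approach for part (i) matches the paper's: decompose $\cone(A)$ into simplicial cones $\cone(W^i)$ with $W^i\subseteq W$, use the group $\Z^m / W^i\Z^m$ together with a chain-of-subgroups argument (of length at most the number of prime factors of $|\det(W^i)|$) to show that the subgroup generated by the columns of $A$ is already generated by at most $\phi(W^i)$ of them, translate each cone deep enough that every feasible $b$ in $K^i+z^i$ satisfies $\sigma(A,b)\le m+\phi(W^i)$, and finish with an Ehrhart count showing the translated cones miss only an asymptotically negligible set. Two small corrections: $\cone(B)$ need not equal $\cone(A)$ for a single $m\times m$ submatrix $B$, and the columns of $A$ need only generate a subgroup $G_{W^i}(A)$ of $\Z^m/W^i\Z^m$, not the whole group --- but the $b$ whose residue lies outside that subgroup are infeasible and hence drop out of both numerator and denominator.

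Your part (ii), however, has a real gap. The claim that ``the favorable cone still captures asymptotically all of $\Z^m \cap \cone(A)$'' is false: a single simplicial cone $\cone(W^1)$ with $\phi(W^1)=\phi^{\min}(W)$ generally covers only a fixed proper \emph{fraction} of $\cone(A)$, so restricting to $b\in\cone(W^1)$ cannot yield a limit of $1$. The paper's mechanism is different and worth absorbing: for $b$ in an arbitrary translated cone $K^i+z^i$, one still works modulo $W^i$ (not modulo $W^1$), but one chooses the \emph{finite} set of coset representatives $\{x^g : g\in G_{W^i}(A)\}$ so that each $x^g$ lies inside $K^1+z^1$. Then $b - x^g$ is a nonnegative integer combination of the columns of $W^i$ (support $\le m$), while $x^g\in K^1+z^1$ already has $\sigma(A,x^g)\le m+\phi^{\min}(W)$ by the part-(i) argument applied to $K^1$. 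This is indeed the ``overshoot absorbed by a second basic solution'' you allude to, but the point is that the second basic solution lives in $W^i$, not in the minimal $W^1$, and it is the finitely many representatives --- not $b$ itself --- that are pushed into the favorable cone.
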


Theorem~\ref{thmMainProb} guarantees that the average support $\sigma^{\asy}(A)$ is linear in $m$ in two special cases: (a) the minimum minor of $A$ is on the order of $2^m$ or (b) there is a prime minor.
We emphasize that \emph{(ii)} uses the \emph{minimum} values $\phi^{\min}$ and $\delta^{\min}$, which can be bounded by sampling any $m\times m$ invertible submatrix of $A$.
Thus, $\sigma^{\asy}(A)$ can be bounded by finding a single $m\times m$ invertible submatrix of $A$.

Note that the bound in~\eqref{eqDetBound} includes the term $g$.
Our proof of Theorem~\ref{thmMainProb} can be adjusted to prove $\sigma^{\asy}(A) \le m + \log_2(g^{-1}\delta^{\max}(W))$ and $\sigma^{\asy}(A) \le 2m + \log_2(g^{-1}\delta^{\min}(W))$. 
We omit this analysis here to simplify the exposition.
However, it should be mentioned that
\[
\textstyle
\delta^{\max}(A) \le (\sum_{\delta \in \Delta(A)} \delta^2)^{1/2} =  \sqrt{\det(A{\displaystyle A^{\intercal}})},
\]
where the equation follows from the so-called Cauchy-Binet formula.
Therefore, if $A$ has two nonzero $m\times m$ minors, then Theorem~\ref{thmMainProb} \emph{(i)} improves~\eqref{eqDetBound}, on average.

A corollary of Theorem~\ref{thmMainProb} is that if $A$ has the Hilbert basis property, then the extreme rays of $\cone(A)$ provide enough information to bound $\sigma^{\asy}(A)$.
\begin{corollary}\label{thmMainHilb}
Let $V \subseteq \Z^m$ and $H \subseteq \Z^{m \times t}$.
Assume that $H$ has the Hilbert basis property and $\cone(H) = \cone(V)$.
Then
\[
\sigma^{\asy}(H) \le m + \phi^{\max}(V)  \le m + \log_{2}\left(\delta^{\max}(V)\right).
\]
\end{corollary}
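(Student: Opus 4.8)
The plan is to obtain the corollary from Theorem~\ref{thmMainProb}~(i) applied to $A = H$. The only issue is that the theorem needs a generating subset of the \emph{columns of $H$}, whereas $V$ need not be a subset of $H$; so the first step is to manufacture the right $W \subseteq H$. I would take $W$ to be the set of primitive integer generators of the extreme rays of $\cone(H)$ (we may assume $\cone(H)$ is pointed, which is the standard setting for Hilbert bases; a nontrivial lineality space can be factored out first). Since $\cone(H)$ is rational and pointed, it equals the cone over the primitive generators of its extreme rays, so $\cone(W) = \cone(H)$. Moreover, a Hilbert basis of a pointed rational cone must contain the primitive generator $u$ of every extreme ray: any nonnegative integer representation of $u$ can only use cone elements on that ray, and on a ray the integer multiples $c \cdot u$ ($c \ge 1$) can sum to $u$ only if $u$ itself is present. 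Hence $W \subseteq H$, and Theorem~\ref{thmMainProb}~(i) gives $\sigma^{\asy}(H) \le m + \phi^{\max}(W)$.

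The second step is to show $\phi^{\max}(W) \le \phi^{\max}(V)$. Let $W'$ be any invertible $m \times m$ submatrix of $W$, say with columns $u_1, \dots, u_m$, the primitive generators of distinct extreme rays $r_1, \dots, r_m$ of $\cone(H) = \cone(V)$. For each $i$, the ray $r_i$ is an extreme ray of $\cone(V)$, hence some column $v_i$ of $V$ lies on $r_i$; as $v_i \in \Z^m$ and $u_i$ is primitive, $v_i = c_i u_i$ with $c_i \in \Z_{\ge 1}$. The $r_i$ being distinct rays, the $v_i$ are distinct columns of $V$, so $V' = (v_1, \dots, v_m)$ is an invertible $m \times m$ submatrix of $V$ with $|\det(V')| = (c_1 \cdots c_m)\,|\det(W')|$. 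Thus $|\det(W')|$ divides $|\det(V')|$, and since the number of prime factors counted with multiplicity is monotone under divisibility, $\phi(W') \le \phi(V') \le \phi^{\max}(V)$. Taking the maximum over all such $W'$ yields $\phi^{\max}(W) \le \phi^{\max}(V)$, and therefore $\sigma^{\asy}(H) \le m + \phi^{\max}(V)$.

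The last inequality $\phi^{\max}(V) \le \log_2(\delta^{\max}(V))$ is elementary: an integer $N \ge 1$ that is a product of $k$ primes satisfies $N \ge 2^k$, so $k \le \log_2 N$; apply this to the submatrix realizing $\delta^{\max}(V)$. Chaining the three estimates proves the corollary. I expect the first step to be the crux: everything hinges on the Hilbert basis property guaranteeing that $H$ already contains the extreme-ray generators, which is exactly what lets us invoke Theorem~\ref{thmMainProb} and then compare the minors of $W$ against the minors of $V$. The determinant divisibility in the second step and the prime-counting bound in the third are routine once that reduction is in place.
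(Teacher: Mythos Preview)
Your proposal is correct and matches the approach the paper intends. The paper does not spell out a proof of the corollary; it only remarks beforehand that ``the extreme rays of $\cone(A)$ provide enough information to bound $\sigma^{\asy}(A)$,'' which is precisely the reduction you carry out: take $W$ to be the primitive extreme-ray generators of $\cone(H)$, use the Hilbert basis property to ensure $W \subseteq H$, apply Theorem~\ref{thmMainProb}\,(i), and then compare minors of $W$ to minors of $V$ via the divisibility $|\det(W')| \mid |\det(V')|$. One minor wording slip in your last paragraph: for the inequality $\phi^{\max}(V) \le \log_2(\delta^{\max}(V))$ you should apply the bound $k \le \log_2 N$ to the submatrix realizing $\phi^{\max}(V)$ (not $\delta^{\max}(V)$) and then bound its determinant by $\delta^{\max}(V)$; the content is of course unaffected.
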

If $\delta^{\max}(V) < 2^{m-3}$, then the bound in Corollary~\ref{thmMainHilb} improves the bound in~\cite{BG2004}.

By modifying a construction in~\cite{ADEOW2018}, we obtain two interesting examples of $\sigma^{\asy}(A)$.
The first example shows that Theorem~\ref{thmMainProb} \emph{(i)} gives a tight bound.
The second example shows that Theorem~\ref{thmMainProb} \emph{(ii)} gives a tight bound and that $\sigma^{\asy}(A)$ can be significantly smaller than $\sigma(A)$.

\begin{theorem}\label{thmMainLower}
For every $m \in \Z_{\ge 1}$ and $d \in \Z_{\ge 1}$, there is a matrix $A \in \Z^{m \times n}$ such that $\phi^{\max}(A) = d$ and $\sigma^{asy}(A) = m+d$.

For every $m \in \Z_{\ge 1}$ and $d \in \Z_{\ge m+3}$, there is a matrix $B \in \Z^{(m+1) \times n}$ such that $\phi^{\min}(B) =0$ and 
\(
\sigma^{asy}(B) = 2m+2 < m+d = \sigma(B).
\)
\end{theorem}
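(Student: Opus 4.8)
The plan is to build both matrices explicitly by adapting the lower-bound construction of Aliev et al.~\cite{ADEOW2018}, and then to control $\sigma^{\asy}$ by controlling, asymptotically, which right-hand sides are feasible and how much support feasible solutions need. For the first matrix, I would start from a construction whose single $m\times m$ minor (up to sign) equals a prime power $p^d$ (or a product of $d$ primes), so that $\phi^{\max}(A)=d$ holds by inspection. A convenient model: take $A$ to consist of the identity block $I_m$ together with one or more extra columns chosen so that $\cone(A)$ is simplicial with $\det = p^d$; then the group $\Z^m / \Lambda$, where $\Lambda$ is the lattice generated by the columns spanning the cone, is cyclic of order $p^d$ (or has the prescribed structure), and the obstruction to writing $b$ with small support is exactly a group-membership condition in this quotient. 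The upper bound $\sigma^{\asy}(A)\le m+d$ is then immediate from Theorem~\ref{thmMainProb}~(i). For the matching lower bound I would exhibit a positive-density (in fact, density tending to $1$ along a full-dimensional sublattice of residues) family of $b$ for which every nonnegative integer solution must use at least $m+d$ columns: the point is that using only $m$ columns forces $b$ to lie in one of finitely many sublattices of index $\ge 2$, which is a measure-zero event asymptotically, and using $m+1,\dots,m+d-1$ columns still leaves a congruence obstruction modulo successive prime factors of $p^d$. Counting lattice points of $\{-t,\dots,t\}^m$ in these sublattices versus in $\{b : P(A,b)\ne\emptyset\}$ and taking $t\to\infty$ gives the limit $<1$ for $k=m+d-1$, hence $\sigma^{\asy}(A)=m+d$.

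For the second matrix $B\in\Z^{(m+1)\times n}$ I would reuse the family from~\cite{ADEOW2018} that witnesses $\sigma(B)=m+d$ for $d\ge m+3$ (these are the examples where the worst-case support is large because one particular $b^\ast$ needs many Hilbert-type generators), but arrange that $\phi^{\min}(B)=0$, i.e.\ that $B$ has \emph{some} $(m+1)\times(m+1)$ submatrix with determinant $\pm 1$ — this is easily forced by appending an $I_{m+1}$ block, which does not decrease $\sigma(B)$ since the worst-case $b^\ast$ can be chosen to avoid being helped by unimodular columns. With $\phi^{\min}(B)=0$, Theorem~\ref{thmMainProb}~(ii) gives $\sigma^{\asy}(B)\le 2m+2$. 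The reverse inequality $\sigma^{\asy}(B)\ge 2m+2$ requires showing that a $1$-density set of right-hand sides genuinely needs support $2m+2$ (and not $2m+1$); here I would again pass to the group $\Z^{m+1}/\Lambda_W$ for the relevant column submatrix $W$ and argue that the set of $b$ solvable with $\le 2m+1$ columns is contained in a finite union of proper sublattices, hence has asymptotic density strictly less than the density of $\{b: P(B,b)\ne\emptyset\}$. The strict inequality $2m+2 < m+d$ is just the hypothesis $d\ge m+3$, and it is what makes the example interesting: the worst case is strictly worse than the average case.

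The main obstacle I anticipate is the lower-bound half of each statement, namely proving that support $m+d-1$ (resp.\ $2m+1$) is \emph{insufficient} for asymptotically almost all feasible $b$, rather than merely for one $b$. The subtlety is that a priori there are $\binom{n}{m+d-1}$ ways to choose a support set, and for each we must understand the semigroup generated by those columns; the event ``$b$ lies in \emph{some} such semigroup with small support'' is a finite union of shifted-semigroup conditions, and I need each such condition to cut out a set of density bounded away from the full feasibility density. The clean way to do this is to show that each small-support semigroup is contained in a coset of a sublattice of $\Z^m$ (resp.\ $\Z^{m+1}$) of index $\ge 2$ coming from a nontrivial prime factor of the governing determinant, and then invoke the elementary fact that the counting ratio $|S\cap\{-t,\dots,t\}^m|/|\Lambda_0\cap\{-t,\dots,t\}^m|$ for a proper sublattice $S\subsetneq\Lambda_0$ tends to $[\Lambda_0:S]^{-1}<1$, so a finite union of such $S$ cannot exhaust $\Lambda_0$ in the limit. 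Making the bookkeeping of ``which sublattice, which prime, which support pattern'' precise — and checking that the construction really forces these prime obstructions at every support level from $m$ up to $m+d-1$ — is where the real work lies; everything else reduces to Theorem~\ref{thmMainProb} and Ehrhart-type lattice-point counting, which I would cite or sketch rather than redo.
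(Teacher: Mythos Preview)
Your outline is broadly on the same track as the paper, but the paper's argument is considerably more concrete and in one place actually proves \emph{less} than you are planning.

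For the first matrix, the paper does not work abstractly with sublattices and does not invoke Theorem~\ref{thmMainProb}~(i) for the upper bound. It takes $d$ distinct primes $p_1<\dots<p_d$, sets $q_i=\prod_{j\ne i}p_j$, $\delta=\prod_j p_j$, and defines the one-row matrix $\tilde A=[q_1,\dots,q_d,-\delta]$; then $A$ is block diagonal with $I^{m-1}$ and $\tilde A$, so $A$ has exactly $m+d$ columns and the upper bound $\sigma^{\asy}(A)\le m+d$ is trivial. The lower bound is a single explicit residue computation: every $b_m<0$ with $b_m\equiv 1\bmod\delta$ forces $\sigma(\tilde A,b_m)=d+1$ (the negative column must be used, and writing the positive remainder with any $q_i$ omitted forces divisibility by $p_i$, contradicting $b_m\equiv 1$). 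This residue class already occupies a fixed positive fraction of the feasible box, so the limit defining $\sigma^{\asy}$ is strictly below $1$ at $k=m+d-1$. Your proposed ``level-by-level congruence obstruction for supports $m,\dots,m+d-1$'' would work, but it is more bookkeeping than the paper needs; choosing $d$ distinct primes rather than a prime power $p^d$ is what makes the one-shot argument clean.

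For the second matrix there are two differences worth noting. First, the paper does \emph{not} append $I_{m+1}$. It appends a unimodular block whose \emph{last row is all ones}, while the $A$-block sits above a row of zeros. This is the device that makes $\sigma(B)=m+d$ survive: for any $b$ with last coordinate $0$, nonnegativity forces all $m+1$ unimodular columns to be unused, so the problem collapses exactly to $P(A,\cdot)$. With a raw $I_{m+1}$ block your claim that ``the worst-case $b^\ast$ can be chosen to avoid being helped by unimodular columns'' needs an argument, because any $b^\ast\in\Z^{m+1}_{\ge 0}$ is solved with support $\le m+1$; you would have to manufacture a $b^\ast$ with a negative coordinate and check feasibility separately. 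The all-ones-row trick sidesteps this entirely.

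Second, and more importantly: the paper's proof only establishes $\sigma^{\asy}(B)\le 2m+2$ (via Theorem~\ref{thmMainProb}~(ii) and $\phi^{\min}(B)=0$) together with $\sigma(B)=m+d$. It does \emph{not} supply the reverse inequality $\sigma^{\asy}(B)\ge 2m+2$ that you flag as the ``main obstacle''. So the sublattice-density argument you sketch for that direction goes beyond what the paper actually proves; if you carry it out you will have done strictly more work, and you should be aware that the paper's appendix treats the displayed equality as if the $\le$ direction suffices for its purpose (exhibiting the gap $\sigma^{\asy}(B)<\sigma(B)$ and the tightness of part~(ii)).
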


The proof of Theorem~\ref{thmMainProb} is based on a combination of group theory, lattice theory, and Ehrhart theory.
On a high level, the combination of group and lattice theory bears similarities to papers of Gomory~\cite{G1965} and Aliev et al.~\cite{ADOO2017}.
Gomory investigated the value function of an IP and proved its periodicity when the right-hand side vector is sufficiently large. 
Aliev et al. showed periodicity for the function $\sigma(A, b)$ provided again that $b$ is sufficiently large. 
Our refined analysis allows us to quantify the number of right-hand sides for which the support function is small.
This new contribution requires not only group and lattice theory, but also Ehrhart theory.

Sections~\ref{secGroup} and~\ref{secCones} we provide background on groups and subcones.
In Section~\ref{secMainProb} we use the average support for each subcone to prove Theorem~\ref{thmMainProb}.
We prove Theorem~\ref{thmMainLower} in Appendix~\ref{secMainLower}.

% ------------------------------------------------------------------------------------------------------
\section{The group structure of a parallelepiped}\label{secGroup}
% ------------------------------------------------------------------------------------------------------

Let $W \in \Z^{m\times m}$ be an invertible matrix, which we also view as a set of $m$ linearly independent column vectors.
Let $\Pi(W)$ denote the integer vectors in the fundamental parallelepiped generated by $W$:
\[
\Pi(W) := \{z \in \Z^m : z = W \lambda  \text{ for } \lambda \in [0,1)^m\}.
\]

For each $b \in \Z^m$, there is a unique $g \in \Pi(W)$ such that $ b = g + Wz$, where $z \in \Z^m$~\cite[Lemma 2.1, page 286]{barv}.
Thus, we can define a \emph{residue function} $\rho_W : \Z^m \to \Pi(W)$ by
\begin{equation}\label{eqResidue}
\rho_W(b) = \rho_W (g + Wz ) \mapsto g.
\end{equation}
The image of $\Z^m$ under $\rho_W$ (i.e., $\Pi(W)$) creates a group $G_W(\Z^m) $ using the operation $+_{G_W} : \Pi(W)\times \Pi(W) \to \Pi(W)$ defined by
\[
g +_{G_W} h \mapsto \rho_W(g+h).
\]
The identity of $G_W(\Z^m)$ is the zero vector in $\Z^m$, and 
\begin{equation}\label{eqDetCardinality}
|G_W(\Z^m)| = |\det(W)|,
\end{equation}
see, e.g., ~\cite[Corollary 2.6, page 286]{barv}.
Equation~\eqref{eqDetCardinality} implies $G_W(\Z^m)$ is finite.

The choice of notation for $G_{W}(\Z^m)$ is to emphasize that it is the group generated by the residues of all integer linear combinations of vectors in $\Z^m$.
We can also consider the group generated by any subset of vectors in $\Z^m$.
Given $B \subseteq \Z^m$, we denote the subgroup of $G_W(\Z^m)$ generated by $B$ by
\begin{equation}\label{eqSubgroup}
G_W(B) := \{\rho_W(Bz) : z \in \Z^{|B|}\}.
\end{equation}
If $B = \emptyset$, then $G_W(B) := \{0\}$.
The set $G_W(B)$ is a subgroup of $G_W(\Z^m)$ because $\{Bz : z \in \Z^{|B|}\}$ is a sublattice of $\Z^m$.

We collect some basic properties about the group $G_W(B)$. 
\begin{lemma}\label{lemSubgroup}
Let $W \in \Z^{m\times m}$ be an invertible matrix.
For every $B \subseteq \Z^m$, $G_W(B) = \{\rho_W(Bz) : z \in \Z^{|B|}_{\ge 0}  \}$.
\end{lemma}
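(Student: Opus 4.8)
The plan is to show the two sets are equal by proving the nontrivial inclusion $G_W(B) \subseteq \{\rho_W(Bz) : z \in \Z^{|B|}_{\ge 0}\}$; the reverse inclusion is immediate since $\Z^{|B|}_{\ge 0} \subseteq \Z^{|B|}$. So fix an arbitrary $z \in \Z^{|B|}$ and set $g := \rho_W(Bz) \in G_W(B)$; I want to produce a \emph{nonnegative} integer vector $z'$ with $\rho_W(Bz') = g$. The key observation is that $G_W(B)$ is a finite group (being a subgroup of $G_W(\Z^m)$, which has order $|\det(W)|$ by~\eqref{eqDetCardinality}), so every element has finite order. In particular, each generator $\rho_W(b)$ for $b \in B$ has finite order, say $n_b \ge 1$, meaning $\rho_W(n_b b) = 0$ in the group.

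First I would record the homomorphism property: $\rho_W(u + v) = \rho_W(u) \pG \rho_W(v) \pG$-style, more precisely $\rho_W(Bz_1 + Bz_2) = \rho_W(Bz_1) +_{G_W} \rho_W(Bz_2)$, which follows directly from the definition of $+_{G_W}$ and the fact that $\rho_W$ is additive modulo the lattice $W\Z^m$. Consequently, for $z = (z_1, \dotsc, z_{|B|})^\intercal$ enumerating $B = \{b_1, \dotsc, b_{|B|}\}$, we have $\rho_W(Bz) = \sum_{i} z_i \cdot \rho_W(b_i)$ where the sum and scalar multiples are taken in $G_W(\Z^m)$. Now for each index $i$, let $n_i$ be the order of $\rho_W(b_i)$ in the group, and choose an integer $k_i \ge 0$ large enough that $z_i + k_i n_i \ge 0$; this is possible since $n_i \ge 1$. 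Setting $z'_i := z_i + k_i n_i \ge 0$, we get $z'_i \cdot \rho_W(b_i) = z_i \cdot \rho_W(b_i)$ in the group because $k_i n_i \cdot \rho_W(b_i) = 0$. Hence $\rho_W(Bz') = \sum_i z'_i \cdot \rho_W(b_i) = \sum_i z_i \cdot \rho_W(b_i) = \rho_W(Bz) = g$, and $z' \in \Z^{|B|}_{\ge 0}$, which is what we wanted.

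The only mild subtlety is making sure the group arithmetic is handled cleanly: one should verify that $\rho_W$ composed with addition in $\Z^m$ realizes the group operation $+_{G_W}$, i.e. $\rho_W(Bz)$ really does equal the $+_{G_W}$-sum $\bigoplus_i z_i \rho_W(b_i)$, including the case of negative coefficients $z_i$ (where $z_i \rho_W(b_i)$ means adding the inverse $|z_i|$ times). This is a routine consequence of $\rho_W$ being the canonical projection $\Z^m \to \Z^m / W\Z^m$ under the identification $\Pi(W) \cong \Z^m / W\Z^m$, so I do not anticipate a real obstacle — the argument is essentially the standard fact that a subgroup of a finite group generated by a set $S$ equals the set of finite products of elements of $S$ (no inverses needed), specialized to the abelian setting where "finite products" become nonnegative integer combinations. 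The finiteness of $G_W(\Z^m)$ from~\eqref{eqDetCardinality} is the one external input that makes it work.
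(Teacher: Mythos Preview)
Your proposal is correct and follows essentially the same approach as the paper: both use the finiteness of $G_W(\Z^m)$ (via~\eqref{eqDetCardinality}) to conclude that each $\rho_W(b)$ has finite order, and then replace negative coefficients by nonnegative ones by adding appropriate multiples of that order. The paper phrases the reduction as ``it suffices to show $\rho_W(-b)$ lies in the nonnegative set'' and writes $\rho_W(-b)=\rho_W((\tau-1)b)$, whereas you shift all coordinates at once via $z_i'=z_i+k_in_i$; these are cosmetic variants of the same standard argument that in a finite abelian group the subsemigroup and subgroup generated by a set coincide.
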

\begin{proof}
For each $z \in \Z^{|B|}$, we can write $Bz$ as
\[
\textstyle
Bz = \sum_{b \in B : z_b \ge 0} z_b b + \sum_{b \in B : z_b < 0} z_b b.
\]
Thus, it suffices to show $\rho_W(-b) \in  \{\rho_W(By) : y \in \Z^{|B|}_{\ge 0}  \}= : C$ for each $b \in B$.
If $\rho_W(b) = 0$, then $\rho_W(-b) = \rho_W(b) = 0 \in  C$.
If $\rho_W(b) \neq 0$, then because $G_W(B)$ is finite there exists $\tau \in \Z_{\ge 2}$ with $\rho_W(\tau b ) = 0$.
Note that $\rho_W((\tau-1)b) + \rho_W(b) = 0 = \rho_W(b) + \rho_W(-b)$, so $\rho_W(-b) = \rho_W((\tau-1)b) \in  C$.
\qed
\end{proof}
\begin{lemma}\label{lemGroupDecomp}
Let $W \in \Z^{m\times m}$ be an invertible matrix and $B \subseteq \Z^m$. 
If $t \in \Z_{\ge 0}$ with $t \ge \phi(W)$, then there exist $w^1, \dotsc, w^t \in B$ (possibly with repetitions) such that $G_W(\{w^1, \dotsc, w^t\}) = G_W(B)$.
\end{lemma}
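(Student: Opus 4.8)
The plan is to reduce to the cyclic case and then argue by a counting/structure argument on the finite abelian group $G_W(\Z^m)$. First I would note that $G_W(B)$ is a finite abelian group whose order divides $|\det(W)| = \prod_{i=1}^{\phi(W)} \alpha_i$. Write $G := G_W(B)$ and let $|G| = \prod_{i=1}^s \beta_i$ be the prime factorization of its order; since $|G|$ divides $|\det(W)|$, we have $s \le \phi(W) \le t$, counting prime factors with multiplicity. The key structural fact I would invoke is that a finite abelian group whose order has $s$ prime factors (with multiplicity) can be generated by $s$ elements: indeed, by the structure theorem $G \cong \Z/d_1 \oplus \cdots \oplus \Z/d_r$ with $d_1 \mid d_2 \mid \cdots \mid d_r$, and the number $r$ of invariant factors satisfies $r \le s$ because each $d_j \ge 2$ contributes at least one prime factor to $|G| = d_1 \cdots d_r$.

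Next I would turn this abstract generating set into one drawn from $B$ itself. By Lemma~\ref{lemSubgroup}, $G = \{\rho_W(Bz) : z \in \Z_{\ge 0}^{|B|}\}$, and in particular every element of $G$ is $\rho_W$ of a nonnegative integer combination of elements of $B$; equivalently, $G$ is generated (as a group) by the finite set $\{\rho_W(b) : b \in B\}$. A standard fact is that if a finite group $G$ can be generated by $s$ elements and $S$ is any generating set of $G$, then some $s$-element subset (with repetition allowed, though here it is not needed) of $S$ already generates $G$ — one builds a chain $\{0\} = G_0 \subsetneq G_1 \subsetneq \cdots \subsetneq G_r = G$ of subgroups, at each step adjoining an element of $S$ lying outside the current subgroup; this chain has length $r \le s \le t$ since each strict containment at least doubles, hence at least adds a prime factor to the order. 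Thus there are $w^1, \dotsc, w^r \in B$ with $G_W(\{w^1,\dotsc,w^r\}) = G_W(B)$, and padding the list with repetitions of $w^1$ (or with any fixed element of $B$, noting $B \ne \emptyset$ unless $G = \{0\}$, in which case any choice works) up to length $t$ does not change the generated subgroup.

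The main obstacle I anticipate is the bookkeeping around prime factors counted with multiplicity: one must be careful that $\phi(W)$ counts prime factors of $|\det(W)|$ \emph{with multiplicity}, and that the invariant-factor count $r$ of $G$ is bounded by the number of prime factors of $|G|$ with multiplicity, which in turn is at most $\phi(W)$ because $|G|$ divides $|\det(W)|$. A secondary point requiring care is the degenerate case $B = \emptyset$ (then $G_W(B) = \{0\}$ and we may take the $w^i$ to be any vectors, but since $B$ is empty we instead interpret the statement vacuously or note $\phi(W)$ could be $0$ forcing $t \ge 0$ trivially) and the case where some $\rho_W(b) = 0$; these are handled exactly as in the proof of Lemma~\ref{lemSubgroup}. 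Once the multiplicity bookkeeping is set up correctly, the chain-of-subgroups argument is routine.
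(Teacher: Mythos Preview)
Your proposal is correct and follows essentially the same approach as the paper: build a strictly increasing chain of subgroups $\{0\}=G_0\subsetneq G_1\subsetneq\cdots$ by adjoining one element of $B$ at a time, and bound its length by $\phi(W)$ via Lagrange's theorem and prime-factor counting. Your preliminary appeal to the structure theorem for finite abelian groups is superfluous---the chain argument alone already gives $r\le\phi(W)$, as you yourself observe in the second paragraph---but it does no harm.
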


\begin{proof}
Set $s:= \phi(W)$.
First, we show that for each $r \in \{0, \dotsc, s\}$ there exist $w^1, \dotsc, w^r \in B$ (possibly with repetitions) such that
\begin{equation}\label{eqInductionPrime}
\begin{array}{r@{\hskip .5 cm}l}
\text{either}& G_W(\{w^1, \dotsc, w^r\}) = G_W(B) \\[.1 cm]
\text{or} & G_W(\emptyset) \subsetneq G_W(\{w^1\}) \subsetneq \dotsc \subsetneq G_W(\{w^1, \dotsc, w^r\}).
\end{array}
\end{equation}
We prove~\eqref{eqInductionPrime} by induction on $r$.
The result is vacuously true for $r = 0$, so assume that~\eqref{eqInductionPrime} holds for $r \in \Z_{\ge 0}$ and consider $r+1$.
Define 
\begin{equation}\label{eqDefineStep}
 G^r := G_W(\{w^1, \dotsc, w^r\}).
\end{equation}

By the induction hypothesis, there exist $w^1, \dotsc, w^r \in B$ such that~\eqref{eqInductionPrime} holds. 
If $G^r = G_W(B)$, then $w^{r+1} := w^r$ proves~\eqref{eqInductionPrime} for $r+1$.
If $G^r \subsetneq G_W(B)$, then $G^0 \subsetneq \dotsc \subsetneq G^r$ by~\eqref{eqInductionPrime} and induction.
Recall $\rho_W(\cdot)$ from~\eqref{eqResidue}.
If $\rho_W(b) \in G^r$ for every $b \in B$, then $G_W(B) \subseteq G^r$ and $|G_W(B)| \le |G^r| < |G_W(B)|$, which is a contradiction. 
Thus, there exists $w^{r+1} \in B$ such that $\rho_W(w^{r+1}) \not \in G^r$. 
The sequence $G^0, \dotsc, G^r$,  
\(
G^{r+1} := G_W(\{w^1, \dotsc, w^{r+1}\})
\)
satisfies~\eqref{eqInductionPrime}, which proves~\eqref{eqInductionPrime}.

Let $G^1, \dotsc, G^s$ be chosen to satisfy~\eqref{eqInductionPrime}.
If $G^s = G_W(B)$, then set $w^{s+1} = \dotsc = w^t := w^s$ to conclude $G_W(\{w^1, \dotsc, w^t\}) = G_W(B)$.
It is left to consider the case when $G^s \subsetneq G_W(B)$.
We claim that this leads to a contradiction.

By~\eqref{eqPhiDefn} and~\eqref{eqDetCardinality}, $|G_W(\Z^m)| = \prod_{i=1}^s \alpha_i $ for primes $\alpha_1, \dotsc, \alpha_s $.
By~\eqref{eqDefineStep}, $G^1, \dotsc, G^s$ are subgroups of $G_W(\Z^m)$, so $|G^1|, \dotsc, |G^s| $ divide $|G_W(\Z^m)|$ (see, e.g.,~\cite[Chapter 2]{artin}).
Also, $G^s \subsetneq G_W(B)$ and~\eqref{eqInductionPrime} imply that $G^1 \subsetneq \dotsc \subsetneq G^s$.
Hence, $1 < |G^1| < \dotsc < |G^s|$ and $|G^i|$ divides $|G^{i+1}|$ for each $i \in \{1, \dotsc, s-1\}$. 
This implies that $|G^s|$ has at least $s$ many prime factors. 
However, $|G^s| < |G_W(B)| \le |G_W(\Z^m)|$, and $|G_W(\Z^m)|$ only has $s$ many prime factors.
Thus, 
\(
|G^{i}| = |G_W(\Z^m)|
\) 
for some $i \in \{1, \dotsc, s\}$, which contradicts $G^i = G_W(\Z^m) \supseteq G_W(B)$.
\qed
\end{proof}

% ------------------------------------------------------------------------------------------------------
\section{Lattice points in cones}\label{secCones}
% ------------------------------------------------------------------------------------------------------

%
A set $\Lambda \subseteq \Z^m$ is a lattice if $0 \in \Lambda$, $x + y \in \Lambda$ for $x,y\in \Lambda$, and if $x \in \Lambda$ then $-x \in \Lambda$ (see, e.g.,~\cite[Chapter VII]{barv}). 
So, $\Lambda$ is a subgroup of $\Z^m$.
We assume that a lattice contains $m$ linearly independent vectors.
For $B \subseteq \R^m$ and $x \in \R^m$, set $B + x := \{b + x : b \in B\}$.

We use following lemma to find suitable translated subcones in which $\sigma(A, \cdot)$ is bounded. 
The proof of Lemma~\ref{lemOverlappingConesMany} is in Appendix~\ref{appProoflemOverlappingConesMany}.
\begin{lemma}\label{lemOverlappingConesMany}
Let $v^1, \dotsc, v^m \in \Z^m$ be linearly independent vectors and set $K := \cone(\{v^1, \dotsc, v^m\})$.
For $t \in \Z_{\ge 0}$ and $x^1, \dotsc, x^t \in \Z^m$, there is a $z = \sum_{i=1}^m k_i v^i \in K \cap \Z^m$, where $k_1, \dotsc, k_m \in \Z_{\ge 0}$, such that $ K+z \subseteq K \cap \bigcap_{i=1}^t (K+x^i)$.
\end{lemma}

Let $W \subseteq \Z^m$.
For each $x \in \cone(W)$, Carath\'eodory's Theorem implies that there is a linearly independent set $W^i \subseteq W$ such that $x \in \cone(W^i)$. 
Thus,
\begin{equation}\label{eqCover}
\textstyle 
\cone(W) = \bigcup_{i=1}^s \cone(W^i),
\end{equation}
where $s \in \Z_{\ge 1}$ and $W^1, \dotsc, W^s \subseteq W$ are the linearly independent subsets of $W$.
The following lemma states that for a given lattice $\Lambda$, `most' of the points in $\cone(W) \cap \Lambda$ are found in translations of the subcones $\cone(W^1), \dotsc, \cone(W^s)$.

\begin{lemma}\label{lemAsymptoticGeneral}
Let $W \subseteq \Z^m$ be such that $\cone(W)$ is $m$-dimensional.
Let $s \in \Z_{\ge 1}$ and $W^1 ,\dotsc, W^s \subseteq W$ be as in~\eqref{eqCover}. 
Let $\Lambda \subseteq \Z^m$ be a lattice and assume that $W^1, \dotsc, W^s \subseteq \Lambda$.
For each $i \in \{1, \dotsc, s\}$, choose any $k_w \in \Z_{\ge 0}$ for each $w \in W^i$, and define $z^i  := \sum_{w \in W^i} k_w w$. 
Then
\begin{equation}\label{eqLimit1}
\lim_{t \to \infty} 
\frac
{|\{-t, ..., t\}^m \cap \bigcup_{i=1}^s (\Lambda \cap (\cone(W^i)+z^i))|}
{|\{-t, ..., t\}^m \cap \bigcup_{i=1}^s (\Lambda \cap \cone(W^i))|} =  1.
\end{equation}
\end{lemma}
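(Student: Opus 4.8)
The plan is to show that the "difference set" — those lattice points in $\bigcup_{i=1}^s(\Lambda\cap\cone(W^i))$ that are \emph{not} in $\bigcup_{i=1}^s(\Lambda\cap(\cone(W^i)+z^i))$ — grows strictly more slowly than the denominator, so that the ratio of cardinalities tends to $1$. The key is a dimension count via Ehrhart theory: $\Lambda\cap\cone(W^i)$ has a well-defined $m$-dimensional growth rate, while the "missing" region $\cone(W^i)\setminus(\cone(W^i)+z^i)$ is contained in a bounded-width "collar" along the facets of $\cone(W^i)$, hence is essentially $(m-1)$-dimensional.

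First I would reduce to a single cone. Since $\bigcup_i(\Lambda\cap(\cone(W^i)+z^i))\subseteq\bigcup_i(\Lambda\cap\cone(W^i))$, it suffices to bound
\[
N(t):=\big|\{-t,\dots,t\}^m\cap\textstyle\bigcup_{i=1}^s\big(\Lambda\cap(\cone(W^i)\setminus(\cone(W^i)+z^i))\big)\big|
\]
from above, and the full union $\bigcup_i(\Lambda\cap\cone(W^i))$ from below, and show $N(t)$ is $o$ of the latter. For the lower bound on the denominator: $\cone(W)$ is $m$-dimensional, so $\cone(W^1)$ (say) contains a full-dimensional simplicial subcone; intersecting with the lattice $\Lambda$ (which has full rank), standard Ehrhart/lattice-point asymptotics give $|\{-t,\dots,t\}^m\cap\Lambda\cap\cone(W^1)|=\Theta(t^m)$. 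So the denominator is $\Theta(t^m)$.

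Next I would bound $N(t)$. For a fixed $i$, write $\cone(W^i)=\cone(\{v^1,\dots,v^m\})$ with $v^j\in\Lambda$ linearly independent. Every point of $\cone(W^i)$ is $\sum_j\lambda_j v^j$ with $\lambda_j\ge0$, and a point lies in $\cone(W^i)+z^i$ as soon as each $\lambda_j$ exceeds the corresponding coefficient coming from $z^i=\sum_j k_j v^j$ (here I would invoke Lemma~\ref{lemOverlappingConesMany}, or just the explicit translation, to get $\cone(W^i)+z^i=\{\sum_j\lambda_j v^j:\lambda_j\ge k_j\}$ when $z^i$ is a nonnegative integer combination of the $v^j$). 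Hence $\cone(W^i)\setminus(\cone(W^i)+z^i)\subseteq\bigcup_{j=1}^m\{\sum_\ell\lambda_\ell v^\ell:0\le\lambda_j<k_j\}$, a finite union of "slabs" of bounded thickness $k_j$ in the $v^j$ direction. Each slab, intersected with the box $\{-t,\dots,t\}^m$ and the lattice $\Lambda$, contains $O(t^{m-1})$ points — this is the routine Ehrhart-type estimate for a region squeezed between two parallel hyperplanes a bounded distance apart, scaled by $t$. Summing over the finitely many $j$ and the finitely many $i$ gives $N(t)=O(t^{m-1})$.

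Combining, the ratio in~\eqref{eqLimit1} equals $1-N(t)/D(t)$ where the denominator $D(t)=\Theta(t^m)$ and $N(t)=O(t^{m-1})$, so the limit is $1$. The main obstacle I anticipate is making the $O(t^{m-1})$ bound on the slabs fully rigorous: one must be careful that "bounded thickness in the $v^j$ direction" together with membership in $\{-t,\dots,t\}^m$ really does confine the points to a region whose $\Lambda$-point count is $O(t^{m-1})$, since the $v^j$ need not be orthogonal and the box is in the standard basis. The clean way is to note that such a region is contained in $t\cdot S + C$ for a fixed $(m-1)$-dimensional bounded set $S$ and a fixed bounded set $C$ (a prism over an $(m-1)$-dimensional cross-section), and then apply the standard fact that the number of lattice points of a full-rank lattice in $t\cdot S+C$ is $O(t^{m-1})$; everything else is bookkeeping over the finite index sets.
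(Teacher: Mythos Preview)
Your proposal is correct and follows essentially the same strategy as the paper: reduce to showing that the ``missing'' set $\bigcup_i \Lambda\cap[K^i\setminus(K^i+z^i)]$ has $O(t^{m-1})$ points in the box while the denominator has $\Theta(t^m)$, both via Ehrhart-type counts. The only notable difference is in how the collar is handled: the paper first decomposes each slab $\{0\le\lambda_j<k_j\}$ into finitely many translates $r+\cone(I)$ with $r$ ranging over a fixed finite lattice set $L^i$ and $|I|=m-1$, so that Ehrhart theory applies directly to the $(m-1)$-dimensional parallelepipeds $P^{(i,I)}$; you instead bound each slab globally as sitting inside $t\cdot S+C$ for a fixed $(m-1)$-dimensional $S$, which is equally valid and arguably cleaner once the prism-count lemma is in hand.
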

\begin{proof}
For $i \in \{1, \dotsc, s\}$ set $K^i := \cone(W^i)$.
The fraction in~\eqref{eqLimit1} equals 
\[
 1 - \frac{|\{-t, ..., t\}^m \cap \bigcap_{i=1}^s (\Lambda \cap [\cone(W) \setminus (K^i+z^i)])|}
{|\{-t, ..., t\}^m \cap  \bigcup_{i=1}^s (\Lambda \cap K^i)|},
\]
which is at least as large as
\[
 1 - \frac{|\{-t, ..., t\}^m \cap \bigcup_{i=1}^s (\Lambda \cap [K^i \setminus (K^i+z^i)])|}
{|\{-t, ..., t\}^m \cap  \bigcup_{i=1}^s (\Lambda \cap K^i)|}.
\]
Thus, in order to prove~\eqref{eqLimit1}, it is enough to prove 
\begin{equation}\label{eqLimit0}
\lim_{t \to \infty}
\frac{|\{-t, ..., t\}^m \cap \bigcup_{i=1}^s (\Lambda \cap [K^i \setminus (K^i+z^i)])|}
{|\{-t, ..., t\}^m \cap  \bigcup_{i=1}^s (\Lambda \cap K^i)|} = 0.
\end{equation}
By assumption, $\cone(W)$ is $m$-dimensional.
Thus, we may assume that the sets $W^1, \dotsc, W^s$ each have $m$ linearly independent vectors.

Let $i \in \{1, \dotsc, s\}$ and $L^i \subseteq \Lambda \cap K^i$ be the $\Lambda$ points that are coordinate-wise at most one more than $z^i$ in the coordinate system defined by $W^i$:
\[
\textstyle
L^i := \{ \sum_{w \in W^i} \beta_w w : \beta_w \in \R \text{ and } 0 \le \beta_w \le k_w+1~\forall~ w \in W^i \} \cap \Lambda.
\]
The set $L^i$ is finite.

The numerator of~\eqref{eqLimit0} considers $\Lambda \cap [K^i \setminus (K^i + z^i)]$, so take $y \in \Lambda \cap [K^i \setminus (K^i+z^i)]$.
We claim that
\begin{equation}\label{eqYinHPlane}
\textstyle 
y \in r + \{ \sum_{w \in I} \lambda_w w : \lambda_w \in \R_{\ge 0} ~\forall ~ w \in I \},
\end{equation}
where $r \in L^i$ and $I \subseteq W^i$ with $|I| \le m-1$.
Write $y$ as 
\(
y = \sum_{w \in W^i} \gamma_w w,
\)
where $\gamma_w \in \R_{\ge 0}$ for each $w \in W^i$ and $\gamma_{\bar{w}} < k_{\bar{w}}$ for some $\bar{w} \in W^i$.
We have $y - \tau w \in \Lambda$ for each $w \in W^i\setminus\{\bar{w}\}$ and $\tau \in \Z$ because $W^i\subseteq \Lambda$ and $y \in \Lambda$.
In particular, $y -\floor{\gamma_w} w \in \Lambda \cap K^i$ and
\(
y - \sum_{w \in V} \floor{\gamma_w} w \in L^i,
\)
where $V := \{w \in W^i : \gamma_w > k_w +1\}$.
This proves~\eqref{eqYinHPlane}.
Note that we use the fact that $L^i$ is defined by $\beta_w \le k_w+1$ rather than $\beta_w \le k_w$:
if $L^i$ was defined by $\beta_w \le k_w$, then in the extreme case $0 = k_w$ and $\gamma_w \in (0,1)$, the vector $y - \floor{\gamma_w} w = y$ is not in $L^i$.

We use the fact that $|I| < m$ to show $\Lambda \cap [K^i \setminus (K^i + z^i)]$ is contained in finite union of lower dimensional spaces. 
Although we showed $|I| \le m-1$, we can assume $|I| = m-1$ by extending it arbitrarily to have $m-1$ columns and setting $\lambda_w = 0$ for these new columns.
Hence,
\begin{align}\label{eqBadSet}
&   \bigcup_{i=1}^s \Lambda \cap [K^i \setminus (K^i+z^i)]\nonumber\\
\subseteq &
\bigcup_{i = 1}^s ~
\bigcup_{r \in L^i} ~
\bigcup_{\substack {I \subseteq W^i\\ |I| = m-1}} r + \bigg\{ \sum_{w \in I} \lambda_w w : \lambda_w \in \R_{\ge 0} ~\forall ~ w \in I \bigg\}.
\end{align}
For each $i \in \{1, \dotsc, s\}$ and $I \subseteq W^i$ with $|I| = m-1$, define the polytope
\[
\textstyle
P^{(i,I)} := \{ \sum_{w\in I} \lambda_w w : \lambda_w \in [0,1] ~\forall ~ w \in I \}.
\]
By assumption, $w \in \Lambda$ for each $w \in I$, so the vertices of $P^{(i,I)}$ are in $\Lambda$.
Ehrhart theory then implies that there is a polynomial $\pi^{(i,I)}(t)$ of degree $m-1$ such that
\[
\textstyle
\pi^{(i,I)}(t) = | tP^{(i,I)}  \cap \Lambda | =  |\{ \sum_{w \in I} \lambda_w w : \lambda_w \in [0,t] ~ \forall ~ w \in I \} \cap \Lambda|
\]
for each $t \in \Z_{\ge 1}$.
The leading coefficient of $\pi^{(i,I)}$ is the $(m-1)$ dimensional volume of $P^{(i,I)}$, which is positive, see~\cite[Chapter VIII]{barv}.
Similarly, for the polytope
\[
\textstyle
P^{i} := \{ \sum_{w \in W^i} \lambda_w w : \lambda_w \in [0,1] ~\forall ~ w \in W^i \}
\]
there exists a polynomial $\pi^i(t)$ of degree $m$ with positive leading coefficient such that for each $t \in \Z_{\ge 1}$
\[
\textstyle
\pi^i(t) = |tP^{i} \cap \Lambda | =  | \{ \sum_{w \in W^i} \lambda_w w : \lambda_w \in [0,t]  ~\forall~ w \in W^i\} \cap \Lambda |.
\]

Define
\[
\textstyle
d := \max\{ \| r + \sum_{w \in I} w \| _{\infty} : i \in \{1, \dotsc, s\}, ~ r \in L^i,~ I \subseteq W^i \text{ with } |I| \le m-1\}.
\]
We show that the values in~\eqref{eqLimit0} go to zero as $t \to \infty$ by bounding the fraction
\[
\frac{|\{-td, ..., td\}^m \cap \bigcup_{i=1}^s (\Lambda \cap [K^i \setminus (K^i+z^i)])|}
{|\{-td, ..., td\}^m \cap  \bigcup_{i=1}^s (\Lambda \cap K^i)|}
\]
for each $t \in \Z_{\ge 0}$.
By the definition of $d$, $t P^{i} \subseteq \{-td, \dotsc, td\}^m \cap K^i$ for every $i \in \{1, \dotsc, s\}$.
So for each $i \in \{1, \dotsc, s\}$, say $i =1$, it follows that
\[
\pi^1(t) =   |tP^1 \cap \Lambda | \le |\{-td, \dotsc, td\}^m \cap \Lambda \cap K^1| \le \bigg|\{-td, ..., td\}^m \cap  \bigcup_{i=1}^s (\Lambda \cap K^i)\bigg|.
\]
Hence, 
\begin{equation}\label{eqBound1}
\frac{1}{|\{-td, ..., td\}^m \cap  \bigcup_{i=1}^s (\Lambda \cap K^i)|}
\le 
\frac{1}{\pi^1(t)}.
\end{equation}

If $i \in \{1, \dotsc, s\}$ and $y \in \{-td, \dotsc, td\}^m \cap \Lambda \cap [K^i \setminus (K^i+z^i)]$, then, by~\eqref{eqBadSet}, $y = r + \sum_{w \in I} \lambda_w w$ for $r \in L^i$, $I \subseteq W^i$ with $|I| = m-1$, and $\lambda_w \in \R_{\ge 0}$ for each $w \in I$.  
This implies that
\[
\textstyle
\|\sum_{w \in I} \lambda_w w\|_{\infty} = \| y - r \|_{\infty}  \le \|y\|_{\infty} + \|r\|_{\infty} \le td + d = (t+1)d.
\]
Hence,
\[
\{-td, \dotsc, td\}^m \cap \Lambda \cap [K^i \setminus (K^i+z^i) ]\subseteq 
\bigcup_{r \in L^i} ~
\bigcup_{\substack {I \subseteq W^i\\ |I| = m-1}} r + (t+1)d P^{(i,I)}.
\]
If $r \in L^i$, then by the definition of $L^i$, $r \in \Lambda$.
This implies that the number of $\Lambda$ points in $r + (t+1)d P^{(i,I)}$ is equal to $\pi^{(i,I)}((t+1)d)$.
So,
\begin{align}
 |\{-td, \dotsc, td\}^m \cap \bigcup_{i=1}^s (\Lambda \cap [K^i \setminus (K^i+z^i)])|
 \le 
 \sum_{i=1}^s ~ 
\sum_{r \in L^i} ~
\sum_{\substack {I \subseteq W^i\\ |I| = m-1}} \pi^{(i,I)}((t+1)d).\label{eqBound2}
\end{align}

The polynomial on the right-hand side of~\eqref{eqBound2}, call it $\psi(t+1)$, is of degree $m-1$ and has a positive leading coefficient.
Also, by~\eqref{eqBound1} and~\eqref{eqBound2}, 
\[
\frac{|\{-td, ..., td\}^m \cap \bigcup_{i=1}^s (\Lambda \cap [K^i \setminus (K^i+z^i)])|}
{|\{-td, ..., td\}^m \cap \bigcup_{i=1}^s (\Lambda \cap K^i)|} 
 \le \frac{\psi(t+1)}{\pi^1(t)}.
\]
Recall that $\pi^1$ is of degree $m$, $\psi$ is of degree $m-1$, and $\psi$ and $\pi^1$ have positive leading coefficients.
Moreover, the limit as $t \to \infty$ is the same as $td \to \infty$.
Hence, 
\begin{align*}
 &\lim_{t \to \infty} \frac{|\{-t, ..., t\}^m \cap \bigcup_{i=1}^s (\Lambda \cap [K^i \setminus (K^i+z^i)])|}
{|\{-t, ..., t\}^m \cap \bigcup_{i=1}^s (\Lambda \cap K^i)|}  \\
= & 
\lim_{t \to \infty}  \frac{|\{-td, ..., td\}^m \cap \bigcup_{i=1}^s (\Lambda \cap [K^i \setminus (K^i+z^i)])|}
{|\{-td, ..., td\}^m \cap \bigcup_{i=1}^s (\Lambda \cap K^i)|} 
=  \lim_{t \to \infty} \frac{\psi(t+1)}{\pi^1(t)} = 0. \hspace{.25 in} \qed
\end{align*}
\end{proof}

% ------------------------------------------------------------------------------------------------------
\section{Proof of Theorem~\ref{thmMainProb}}\label{secMainProb}
% ------------------------------------------------------------------------------------------------------

The assumption $\cone(A) = \cone(W)$ indicates that we can write $\cone(A)$ as
\begin{equation}\label{eqCoverA}
\textstyle \cone(A) = \bigcup_{i=1}^s \cone(W^i),
\end{equation}
where $s \in \Z_{\ge 1}$ and $W^1, \dotsc, W^s \subseteq W$ are linearly independent sets; see~\eqref{eqCover}.
Also, $A$ has full row rank, so we assume that $W^1, \dotsc, W^s$ each contain $m$ linearly independent vectors.
For $i \in \{1, \dotsc, s\}$, let $K^i := \cone(W^i)$.

First, we prove $\sigma^{\asy}(A) \le m + \phi^{\max}(A)$.
In order to do this, we find a lattice $\Lambda$ and points $z^1 \in K^1, \dotsc, z^s \in K^s$ such that 
\[
\sigma(A,b) \le m + \phi^{\max}(W) \quad \forall ~ b \in (\Lambda \cap (K^1 + z^1)) \cup \dotsc \cup (\Lambda \cap (K^s + z^s))
\]
and $\Lambda$ contains every $ b \in \Z^m $ such that $ P(A,b)  \neq \emptyset $.
With these values, we will be able to apply Lemma~\ref{lemAsymptoticGeneral} to prove the desired result.

Fix $i \in \{1, \dotsc, s\}$ and set
\( 
\phi^i := \phi(W^i).
\)
Let $G_{W^i}(\Z^m)$ be the group defined in Section~\ref{secGroup}.
In view of Lemma~\ref{lemGroupDecomp}, there exist $w^1, \dotsc, w^{t} \in A$ with $t \le \phi^i$ and 
\[
G_{W^i}(\{w^1, \dotsc, w^{t}\}) = G_{W^i}(A).
\]
We emphasize that the choice of $w^1, \dotsc, w^{t}$ depends on $W^i$. 
Define the lattice 
\[
\Lambda^i  := \bigg\{\sum_{h \in G_{W^i}(A)} k_h h + \sum_{w \in W^i} p_w w : k_h \in \Z ~ \forall ~ h \in G_{W^i}(A),~ p_w \in \Z ~\forall ~ w \in W^i \bigg\}.
\] 
In Lemma~\ref{lemmaAllSame}, we show that $\Lambda^i$ does not depend on $i$.
Lemma~\ref{lemSubgroup} implies that $\Lambda^i \supseteq \{g \in G_{W^i}(\Z^m) : \exists ~ b \in \Z^m \text{ such that }\rho_{W^i}(b) = g \text{ and }P(A,b) \neq \emptyset\}$.
Thus,
\begin{equation}\label{eqInfeasibleCond}
\text{if } b \not \in \Lambda^i \text{ (equivalently, if } \rho_{W^i}(b) \not \in G_{W^i}(A))\text{, then }  P(A,b) = \emptyset.
\end{equation}

\begin{lemma}\label{lemmaSuffCond1} 
There exists $z^i \in \Lambda^i \cap K^i $ that satisfies the following: for every $ b \in (K^i + z^i)\cap \Z^m $, either $ b \not \in \Lambda^i $ (so $P(A,b) = \emptyset$) by~\eqref{eqInfeasibleCond}) or $\sigma(A,b) \le m + \phi^i$.
The vector $z^i$ satisfies $z^i = \sum_{w \in W^i}k_w w$, where $k_w \in \Z_{\ge 0}$ for each $w \in W^i$.
\end{lemma}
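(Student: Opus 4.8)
The plan is to build $z^i$ so that the shifted cone $K^i + z^i$ is "saturated" with respect to the group $G_{W^i}(A)$: every residue class that can occur on a feasible right-hand side should already be representable by a nonnegative combination of the few generators $w^1,\dots,w^t$ together with the basis columns $W^i$, and with all the needed slack contained inside $K^i+z^i$. First I would invoke Lemma~\ref{lemSubgroup} (applied to the generating set $\{w^1,\dots,w^t\}$) to obtain, for each $h \in G_{W^i}(A)$, a nonnegative integer vector producing $h$ as $\rho_{W^i}$ of a nonnegative combination of $w^1,\dots,w^t$; call a concrete such combination $u_h = \sum_{j=1}^t c_{h,j} w^j$ with $c_{h,j}\in\Z_{\ge 0}$. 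Since there are only $|\det(W^i)|$ many group elements, finitely many such $u_h$ suffice, and each $u_h$ lies in $K^i$.

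Next I would apply Lemma~\ref{lemOverlappingConesMany} with $K = K^i = \cone(W^i)$ and the finite list of translation vectors $\{u_h : h \in G_{W^i}(A)\}$ (together with $0$): this yields $z^i = \sum_{w\in W^i} k_w w \in K^i\cap\Z^m$ with $k_w\in\Z_{\ge 0}$ such that $K^i + z^i \subseteq \bigcap_h (K^i + u_h)$, which already records the last sentence of the statement. The point of this choice is that any $b\in(K^i+z^i)\cap\Z^m$ can be written, for the particular $h = \rho_{W^i}(b)$, as $b = u_h + y$ with $y \in K^i\cap\Z^m$; crucially, since $b\equiv h$ and $u_h\equiv h$ modulo the lattice $W^i\Z^m$, we get $y \in W^i\Z^m$, so $y = W^i q$ for some integer $q$, and $y\in K^i$ forces $q\in\Z_{\ge 0}^m$ (using that $W^i$ is a nonsingular matrix whose cone is $K^i$).

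Then comes the support count. Assume $b\in\Lambda^i$ (otherwise $P(A,b)=\emptyset$ by~\eqref{eqInfeasibleCond} and we are done). With the decomposition $b = u_h + W^i q = \sum_{j=1}^t c_{h,j} w^j + \sum_{w\in W^i} q_w w$ and all coefficients nonnegative integers, this exhibits an element of $P(A,b)$ — note each $w^j$ and each $w\in W^i$ is a column of $A$. Its support is contained in the index set of $W^i$ together with the indices of $w^1,\dots,w^t$, hence has size at most $m + t \le m + \phi^i$. (A minor bookkeeping point: if some $w^j$ coincides with a column of $W^i$ the bound only improves; and one should present $b$ as a genuine vector $x\in\Z_{\ge 0}^n$ with $Ax=b$ by summing coefficients on repeated columns.)

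The main obstacle I anticipate is the interface between the group-theoretic statement and the geometry: Lemma~\ref{lemSubgroup} only tells us that the residue $h$ is \emph{achievable} by some nonnegative combination of $w^1,\dots,w^t$, but a priori the "quotient" $y = b - u_h$ could fail to be in $K^i$ for a fixed choice of $u_h$ — different $b$'s in the same class might need different representatives. This is exactly why the translation trick of Lemma~\ref{lemOverlappingConesMany} is needed: by pushing $z^i$ deep enough into $K^i$ so that $K^i + z^i$ sits inside \emph{every} $K^i + u_h$ simultaneously, we guarantee $b - u_h \in K^i$ uniformly over the (finitely many) classes. Verifying that $b - u_h$ is not merely in $K^i$ but in $K^i\cap W^i\Z^m$, and hence is a nonnegative \emph{integer} combination of $W^i$, is the one place requiring a careful congruence argument, but it is immediate from $b\equiv u_h \pmod{W^i\Z^m}$ and the definition of $\Pi(W^i)$/$\rho_{W^i}$.
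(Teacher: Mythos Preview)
Your proof is correct and follows essentially the same approach as the paper: pick representatives $u_h$ (the paper calls them $x^g$) for each element of $G_{W^i}(A)$ via Lemma~\ref{lemSubgroup}, apply Lemma~\ref{lemOverlappingConesMany} to push $z^i$ deep enough that $K^i+z^i\subseteq\bigcap_h(K^i+u_h)$, and then decompose $b=u_h+W^iq$ with $q\in\Z_{\ge 0}^m$ to get $\sigma(A,b)\le m+t\le m+\phi^i$. One minor slip: your side remark that ``each $u_h$ lies in $K^i$'' is false in general (the $w^j$ are columns of $A$, not of $W^i$, so $u_h\in\cone(A)$ only), but you never use this claim---Lemma~\ref{lemOverlappingConesMany} does not require the translation vectors to lie in $K$---so the argument is unaffected.
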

\begin{cpl}
For each $g \in G_{W^i}(A) = G_{W^i}(\{w^1, \dotsc, w^{t}\})$, there exists $x^g \in \Z^m$ such that
\begin{equation}\label{eqResidueReps}
\textstyle
x^g - g = \sum_{w \in W^i} \tau_w w \quad \text{and} \quad x^g = \sum_{w \in W^i} q_w w + \sum_{\ell=1}^{t}  p_{\ell} w^{\ell},
\end{equation}
where $\tau_w \in \Z$ and $q_w \in \Z_{\ge 0}$ for each $w \in W^i$ and $p_1, \dotsc, p_{t} \in \Z_{\ge 0}$. 
By Lemma~\ref{lemOverlappingConesMany}, there exists 
\(
z^i \in \Lambda^i \cap K^i 
\)
such that $\rho_{W^i}(z^i) = 0$ and
\(
K^i + z^i \subseteq K^i  \cap \bigcap_{g \in G_{W^i}(A)} (K^i + x^g).
\)
Let $b \in (K^i + z^i) \cap \Z^m$ such that $P(A,b) \neq \emptyset$.
By~\eqref{eqInfeasibleCond}, there is a $g \in G_{W^i}(A)$ such that $\rho_{W^i}(b) = g$.
So, by~\eqref{eqResidueReps},
\begin{equation}\label{eqFinalRep}
\textstyle
\begin{array}{rcl}
b 
= g + \sum_{w \in W^i} \bar{\tau}_{w} w 
&=& x^g + \sum_{w \in W^i} ( \bar{\tau}_{w} -  \tau_w) w\\[.15 cm]
&= &\sum_{\ell=1}^t  p_{\ell} w^{\ell} + \sum_{w \in W^i} (q_{w} + \bar{\tau}_w -   \tau_w ) w,
\end{array}
\end{equation}
where $\bar{\tau}_w \in \Z$ for each $w \in W^i$.
Note $\bar{\tau}_{w} -  \tau_w \in \Z_{\ge 0}$ for each $w \in W^i$ because $b \in K^i + z^i \subseteq K^i + x^g$.
Thus, $P(A,b) \neq \emptyset$ and $\sigma(A,b) \le |W^i| + t \le m + \phi^i$.
\end{cpl}

\begin{lemma}\label{lemmaAllSame} 
For every pair $i,j \in \{1, \dotsc, s\}$, the lattices $\Lambda^i$ and $\Lambda^j$ are equal.
\end{lemma}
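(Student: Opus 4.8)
The plan is to show that each $\Lambda^i$ equals the fixed lattice
\[
\textstyle
\Lambda := \{b \in \Z^m : P(A,b) \neq \emptyset\} + \{b \in \Z^m : P(A,-b) \neq \emptyset\}
\]
(equivalently, the lattice generated over $\Z$ by the columns of $A$), which visibly does not depend on $i$. Concretely, I will argue that for each $i$,
\[
\Lambda^i = \{Az : z \in \Z^n\} =: \Lambda_A.
\]
The right-hand side is the lattice generated by all columns of $A$, and since $W^i \subseteq A$ and every generator $w^1,\dots,w^t$ of $G_{W^i}(A)$ was chosen from $A$, one inclusion will be essentially immediate; the reverse inclusion is where the work lies.

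First I would establish $\Lambda^i \subseteq \Lambda_A$. By definition $\Lambda^i$ is generated by the vectors $\{h : h \in G_{W^i}(A)\}$ together with the columns of $W^i$. The columns of $W^i$ lie in $\Lambda_A$ trivially. For a generator $h \in G_{W^i}(A)$, by~\eqref{eqSubgroup} there is $z \in \Z^{|A|}$ with $h = \rho_{W^i}(Az)$, and by~\eqref{eqResidue} we have $h = Az - W^i z'$ for some $z' \in \Z^m$; hence $h \in \Lambda_A$ as well. This gives $\Lambda^i \subseteq \Lambda_A$.

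Next, the reverse inclusion $\Lambda_A \subseteq \Lambda^i$. It suffices to show each column $a$ of $A$ lies in $\Lambda^i$. Write $a = \rho_{W^i}(a) + W^i z'$ for some $z' \in \Z^m$ via~\eqref{eqResidue}; the term $W^i z'$ is in $\Lambda^i$ since $W^i$ generates part of $\Lambda^i$. It remains to see $\rho_{W^i}(a) \in \Lambda^i$. But $\rho_{W^i}(a) = \rho_{W^i}(Ae_a) \in G_{W^i}(A)$, and $G_{W^i}(A) = G_{W^i}(\{w^1,\dots,w^t\})$, so $\rho_{W^i}(a)$ is an element of the group $G_{W^i}(A)$ — hence one of the explicit generators used to build $\Lambda^i$, so it lies in $\Lambda^i$. (One must take care that $\Lambda^i$ was defined as the $\Z$-span of \emph{all} of $G_{W^i}(A)$, not just of $w^1,\dots,w^t$; this is exactly what makes this step clean, since every element of $G_{W^i}(A)$ is a generator by fiat.) Combining the two inclusions yields $\Lambda^i = \Lambda_A$ for every $i$, and in particular $\Lambda^i = \Lambda^j$.

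The main obstacle is bookkeeping between the three representations of a vector: as a group element of $G_{W^i}(\Z^m)$, as $\rho_{W^i}(b) + W^i z$, and as an honest integer combination of columns of $A$. The identity~\eqref{eqResidue} is the bridge, and the key conceptual point — that $\Lambda^i$ is nothing but the column lattice $\Lambda_A = \{Az : z \in \Z^n\}$, which is manifestly $i$-independent — should be isolated and stated first, after which both inclusions are short. I do not anticipate any genuine difficulty beyond making sure the generators $h \in G_{W^i}(A)$ are correctly unwound via $\rho_{W^i}$ into elements of $\Lambda_A$ and back.
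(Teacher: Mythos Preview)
Your argument is correct, and it is genuinely different from the paper's. You identify $\Lambda^i$ directly with the column lattice $\Lambda_A = A\Z^n$, which is manifestly independent of $i$; both inclusions follow from unwinding the definition of $\rho_{W^i}$ and the fact that $W^i \subseteq A$. This is clean and entirely self-contained.

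The paper instead proves $\Lambda^1 \subseteq \Lambda^2$ by an indirect route that leans on Lemma~\ref{lemmaSuffCond1} (and through it Lemma~\ref{lemOverlappingConesMany}): given $x \in \Lambda^1$, it finds a point $y \in (K^1 + z^1) \cap \Lambda^1$ with $\rho_{W^1}(y) = \rho_{W^1}(x)$, uses Lemma~\ref{lemmaSuffCond1} to conclude $P(A,y) \neq \emptyset$, then invokes the contrapositive of~\eqref{eqInfeasibleCond} to place $y$ in $\Lambda^2$; since each $w \in W^1$ is a column of $A$ (so $P(A,w)\neq\emptyset$ and hence $w\in\Lambda^2$), the relation $x - y \in W^1\Z^m$ gives $x \in \Lambda^2$. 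This works but reuses the cone-translation machinery already set up for the main argument. Your approach buys a shorter, more transparent proof that does not depend on Lemmata~\ref{lemOverlappingConesMany} or~\ref{lemmaSuffCond1} at all; the paper's approach buys nothing extra here, it simply recycles tools already in hand. One minor remark: your opening displayed definition of $\Lambda$ as a sum of two feasibility sets is correct but unnecessary---you immediately replace it by $\Lambda_A = A\Z^n$, and the proof only uses the latter, so you could drop the first description.
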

\begin{cpl}
It is enough to show that $\Lambda^1 \subseteq \Lambda^2$.
Let $x \in \Lambda^1$.
By Lemmata~\ref{lemOverlappingConesMany} and~\ref{lemmaSuffCond1}, there is a point $y \in (K^1 + z^1) \cap \Lambda^1$ such that $\rho_{W^1}(y) = \rho_{W^1}(x)$. 
Also, by Lemma~\ref{lemmaSuffCond1}, $P(A,y) \neq \emptyset$.
Hence, by~\eqref{eqInfeasibleCond}, $y \in \Lambda^2$.
Similarly, $w \in \Lambda^2$ for each $w \in W^1$. 
These inclusions along with $\rho_{W^1}(y) = \rho_{W^1}(x)$ imply $x \in \Lambda^2$.
\end{cpl}

Set $\Lambda := \Lambda^1 = \dotsc = \Lambda^s$.
Lemma~\ref{lemmaSuffCond1} implies that
\[
\textstyle 
\bigcup_{i=1}^s (\Lambda \cap (K^i+z^i)) \subseteq \{b \in \Z^m : \sigma(A,b) \le m +\phi^{\max}\}.
\]
By~\eqref{eqCoverA} and~\eqref{eqInfeasibleCond}, it follows that
\[
\textstyle
\{b \in \Z^m : P(A,b) \neq \emptyset\} 
\subseteq 
\cone(A) \cap \Lambda
= 
\bigcup_{i=1}^s \Lambda \cap K^i
\]
Hence, for each $t \in \Z_{\ge 1}$, it follows that
\begin{align}\label{eqFinalLimit} 
& ~ \frac
{|\{b \in \{-t, ..., t\}^m : \sigma(A,b) \le m +\phi^{\max}\}|}
{|\{b \in \{-t, ..., t\}^m : P(A,b) \neq \emptyset\}|} \nonumber\\[.1 cm]
\ge & ~
\frac
{|\{b \in \{-t, ..., t\}^m \cap \left( \bigcup_{i=1}^s \Lambda \cap (K^i+z^i) \right) \}|}
{|\{b \in \{-t, ..., t\}^m  \cap \left( \bigcup_{i=1}^s \Lambda \cap K^i \right)\}|}.
\end{align}

By Lemma~\ref{lemAsymptoticGeneral}, it follows that $\sigma^{\asy}(A) \le m +\phi^{\max}(W)$.
Also, the inequality $\phi^i \le \log_2(|\det(W^i)|)$ for each $i \in \{1, \dotsc, s\}$ implies $\phi^{\max}(W) \le \log_2(\delta^{\max}(W))$ and $\sigma^{\asy}(A) \le m + \phi^{\max}(W) \le m + \log_2(\delta^{\max}(W))$.
Consider the inequality $\sigma^{\asy}(A) \le 2m +\phi^{\min}(W)$.
Without loss of generality, $\phi^1 \le \dotsc \le \phi^s$.
Let $z^1 \in K^1 \cap \Lambda$ be given from Lemma~\ref{lemmaSuffCond1}.
Let $i \in \{2, \dotsc, s\}$.
Using Lemma~\ref{lemOverlappingConesMany} and the fact that $K^1 + z^1$ is $m$-dimensional, the representative set $\{x^g : g \in G_{W^i}(A)\}$ from~\eqref{eqResidueReps} can be chosen in $K^1 + z^1$.
Let $b \in K^i+z^i$.
By~\eqref{eqFinalRep}, there exists a $g \in G_{W^i}(A)$ such that
\[
\textstyle 
b= x^g + \sum_{w \in W^i} ( \bar{\tau}_{w} -   \tau_w ) w,
\]
where $\bar{\tau}_{w} -   \tau_w \in \Z_{\ge 0}$ for each $w \in W^i$.
The point $x^g$ is in $K^1 + z^1$, so $P(A, x^g) \neq \emptyset$ and there are $w^1, \dotsc, w^{m+ \phi_1} \in A$ such that
\(
x^g = \sum_{i=1}^{m+ \phi_1} q_\ell w^{\ell},
\)
where  $q_1, \dotsc, q_{m+\phi_1} \in \Z_{\ge 0}$.
So,
\[ 
\textstyle
b= \sum_{\ell=1}^{m+ \phi_1} q_\ell w^{\ell} + \sum_{w \in W^i} ( \bar{\tau}_{w} -   k_w ) w.
\]
Thus, $P(A,b) \neq \emptyset$ and $\sigma(A,b) \le 2m + \phi^1 = 2m +\phi^{\min}(W)$.
Hence, $\sigma^{\asy}(A) \le 2m +\phi^{\min}(W)$.

Finally, assume $\log_2(\delta^{\min}(W)) = \log_2(W^2)$.
Observe that $\phi(W^2) \le \log_2(W^2)$, so $\sigma^{\asy}(A) \le 2m + \phi^{\min}(W) \le 2m + \log_2(\delta^{\min}(W))$.
 \qed

%------------
\subsection*{Acknowledgements}

The authors would like to thank anonymous referees for helping to improve the presentation of the paper and Marie Putscher for identifying a typo in the proof of Lemma 4. 

% ------------------------------------------------------------------------------------------------------

\bibliographystyle{splncs04}
\bibliography{references}

% ------------------------------------------------------------------------------------------------------

\appendix

% ------------------------------------------------------------------------------------------------------
\section{Proof of Theorem~\ref{thmMainLower}}\label{secMainLower}
% ------------------------------------------------------------------------------------------------------

We construct both matrices $A$ and $B$ using a submatrix $\tilde{A}$, which we construct first.
Let $d \in \Z_{\ge 1}$ and $p_1 < \dotsc < p_d$ be prime. 
For $i \in \{1, \dotsc, d\}$, define 
\(
q_i := \prod_{j=1, j \neq i}^d p_i
\)
and $\delta := \prod_{j=1}^d p_i$.
Define the matrix
\(
\tilde{A} := \begin{bmatrix} q_1, & \dotsc & q_d, & - \delta \end{bmatrix}.
\)
The matrix $\tilde{A}$ has $d+1$ columns, so $\sigma^{\asy}(\tilde{A}) \le 1+d$.
The matrix $\tilde{A}$ is similar to the example in~\cite[Theorem 2]{ADEOW2018} and the theory of so-called \emph{primorials}. 
We claim 
\begin{equation}\label{eqLowerBound}
\text{if } b \in \Z_{< 0} \text{ and } b \equiv 1 \text{ mod } \delta, \text{ then } P(\tilde{A},b) \neq \emptyset \text{ and } \sigma(\tilde{A},b) = 1 + d.
\end{equation}
Note that $\gcd(q_1, \dotsc, q_d) = 1$.
The \emph{Frobenius number} of $\{q_1, \dotsc, q_d\}$ is the largest integer that cannot be written as a positive integer linear combination of $q_1, \dotsc,$ and $q_d$.
Hence, if we choose $\bar{b} \in \Z_{\ge 1}$ to be the Frobenius number of $\{q_1, \dotsc, q_d\}$, then $b \ge \bar{b}+1$ implies $P(\tilde{A},b) \neq \emptyset$.
If $b \equiv 1 \text{ mod } \delta$, then $b$ is not divisible by $p_i$ for any $i \in \{1, \dotsc, d\}$.
Thus, if $b \ge \bar{b}+1$ and $b \equiv 1 \text{ mod } \delta$, then $\sigma(\tilde{A}, b) = d$. 
Finally, observe that if $b < 0$, then $b + k\delta  >\bar{b}$ for large enough $k \in \Z_{\ge 1}$.
The only negative column of $\tilde{A}$ is $-\delta$, so $\sigma(\tilde{A},b) = 1+d$.
This proves~\eqref{eqLowerBound}.

Now we define the matrix $A$.
Let $m \in \Z_{\ge 1}$ and define
\[
A := 
\begin{bmatrix}
I^{m-1}  ~ &  0^{(m-1)\times (d+1)} \\  
 0^{1 \times (m-1)} & \tilde{A} 
\end{bmatrix} \in \Z^{m \times (m+d)},
\]
where $I^k \in \Z^{k \times k}$ is the identity matrix and $0^{k \times s} \in \Z^{k \times s}$ is the all zero matrix for $k,s\in \Z_{\ge 1}$.
Note that $\phi^{\max}(A) = d$.
If $b \in \Z^{m-1}_{> 0} \times \Z_{<0}$ is such that the last component is equivalent to $1 \text{ mod } \delta$, then $\sigma(A,b) = m+d$ by the arguments above.
Now, the set of $b\in \Z^m$ such that $P(A,b) \neq \emptyset$ is contained in $\Z^{m-1}_{\ge 0} \times \Z$.
So, for every $t \in \Z_{\ge 1}$, the set of feasible solutions in $\{-t\delta, \dotsc, t\delta\}^m$ contains $t(t\delta-1)^{m-1}$ points $b$ such that $\sigma(A,b) = m+d$.
Moreover, if $t \in \Z_{\ge \bar{b}}$, then $P(A,b) \neq \emptyset$ for every $b \in \{0, \dotsc, t\delta\}^{m-1}\times \{-t\delta, \dotsc, t\delta\}$.
Therefore,
\[
\begin{array}{rclcl}
 &\displaystyle \lim_{t \to \infty} \frac
{|\{b \in \{-t, ..., t\} : \sigma(A,b) \le (m-1) + d\}|}
{|\{b \in \{-t, ..., t\} : P(A,b) \neq \emptyset \}|} \\[.5 cm]
= &\displaystyle \lim_{t \to\infty}
\frac{|\{b \in \{-t\delta, ..., t\delta\} : \sigma(A,b) \le (m-1) + d\}|}
{|\{b \in \{-t\delta, ..., t\delta\} : P(A,b) \neq \emptyset \}|} \\[.5 cm]
\le & \displaystyle
\lim_{t \to\infty}
\frac{(2t\delta+1)(t\delta+1)^{m-1} - t(t\delta+1)^{m-1}}
{(2t\delta+1)(t\delta+1)^{m-1}} & <& 1.
\end{array}
\]
Using this and the fact that $A$ has $m + d$ columns, we have $\sigma^{\asy}(A) = m+d$.

Now we define the matrix $B$.
Let $A \in \Z^{m\times (m+d)}$ be as above.
Let $e^{1 \times (m+1)} \in \Z^{1 \times (m+1)}$ be the all ones matrix and $U \in \Z^{m\times (m+1)}$.
Assume
\[
\bigg|
\det\bigg(\begin{bmatrix}
U \\
e^{1 \times (m+1)}
\end{bmatrix}
\bigg)\bigg| =1
\]
and set
\[
B := 
\begin{bmatrix}
U  ~ & A \\
e^{1 \times (m+1)} ~ &  0^{1 \times (m+d)} 
\end{bmatrix} \in \Z^{(m+1) \times (2m+1+d)}.
\]
Note that $\phi^{\min}(B) = 0$, so Theorem~\ref{thmMainProb} \emph{(ii)} implies that $\sigma^{\asy}(B) \le 2m+2$.
Let $b \in \Z^{m}\times\{0\} $ be such that $P(B,b) \neq \emptyset$.
If $z \in P(B,b)$, then the first $m+1$ components of $z$ are zero.
So, similarly to above, there are $b \in \Z^{m+1} $ such that $\sigma(B,b) = m+d$.
Hence, $\sigma^{asy}(B) \le 2m+2 < m+d = \sigma(B)$.
 \qed

%% ------------------------------------------------------------------------------------------------------
%\section{Proof of Lemma~\ref{lemSubgroup}}\label{appProoflemSubgroup}
%% ------------------------------------------------------------------------------------------------------
%
%
%{\color{red}Change this.}
%%
%$G_W(\Z^m)$ is finite and Abelian, so it is enough to show $g+_{G_W} h = \rho_W(g+h) \in G_W(B)$ for each pair $g,h \in G_W(B)$.
%%
%Let $g,h \in G_W(B)$. 
%%
%By~\eqref{eqSubgroup} and~\eqref{eqResidue}, we have 
%%
%\[
%g = Bq + Ws
%\quad \text{and} \quad
%h = Bp+ Wt ,
%\]
%%
%where $q, p \in \Z^{|B|}_{\ge 0}$ and $s, t\in \Z^m$.
%%
%So, 
%%
%\(
%g+h = B(q+p) + W(s+t).
%\)
%%
%Thus, $ \rho_W(g+h) = \rho_W(B(q+p))$ and $g+_{G_W} h \in G_W(B)$ by~\eqref{eqSubgroup}.
%%
%\qed
%%

% ------------------------------------------------------------------------------------------------------
\section{Proof of Lemma~\ref{lemOverlappingConesMany}}\label{appProoflemOverlappingConesMany}
% ------------------------------------------------------------------------------------------------------

Assume that $t = 2$.
Let $x := x^1$ and $y := x^2$.
First, we show that $K \cap (K + x) \cap (K+y) \neq \emptyset$.
Since $v^1, \dotsc, v^m$ are linearly independent, $K$ is a full-dimensional simplicial cone.
Hence, there exist linearly independent vectors $a^1, \dots, a^m \in \R^m $ such that $K = \{ w \in \R^m : (a^i)^\intercal w \le 0 ~ \forall ~ i \in \{1, \dots, m\}\}$ and linearly independent vectors $r^1, \dots, r^m \in K$ such that $(a^i)^\intercal r^i < 0$ for each $i \in \{1, \dots, m\}$.

There is a set $ J \subseteq \{1, \cdots, m\}$ such that $(a^j)^\intercal (x-y) > 0$ for each $j \in J$ and $(a^j)^\intercal (x-y) \le 0$ for each $j \in \{1, \dots, m\}\setminus J$.
For $ j \in \{1, \dots, m\}$, set 
\[
\lambda_j := \begin{cases}
\max\left\{ 0, -\frac{(a^j)^\intercal x }{(a^j)^\intercal r^j}\right\}, & \text{if } j \in \{1, \dotsc, m\} \setminus J\\\\
\max\left\{-\frac{(a^j)^\intercal (x-y) }{ (a^j)^\intercal r^j}, -\frac{(a^j)^\intercal x }{(a^j)^\intercal r^j}\right\}, & \text{if } j \in J.
\end{cases}
\]
Note that $\lambda_1, \dots, \lambda_m \in \R_{\ge 0}$, so 
\(
x +\sum_{j=1}^m \lambda_j r^j \in K + x.
\)
For each $i \in \{1, \dots, m\}$, it follows that
\[
(a^i)^\intercal \bigg(x +\sum_{j=1}^m \lambda_j r^j - y \bigg) \le (a^i)^\intercal (x-y) + \lambda_i (a^i)^\intercal r^i \le 0.
\]
So, $x +\sum_{j=1}^m \lambda_j r^j -y \in K$ and $x +\sum_{j=1}^m \lambda_j r^j \in K + y$.
Finally, for each $i \in \{1, \dots, m\}$, it follows that
\[
(a^i)^\intercal \bigg(x +\sum_{j=1}^m \lambda_j r^j \bigg) \le (a^i)^\intercal x + \lambda_i (a^i)^\intercal r^i \le 0.
\]
Hence, $x +\sum_{j=1}^m \lambda_j r^j \in K$ and $ K \cap (K+x) \cap (K+y) \neq \emptyset$.

Let $w \in K \cap (K+x) \cap (K+y) $. 
Then $K+ w \subseteq K \cap (K+x) \cap (K+y)$.
Because $K$ is full-dimensional, there exists a point $z \in (K+w) \cap \Z^m$ such that $z = \sum_{i=1}^m k_i v^i$ for $k_i \in \Z_{\ge 0}$.
Note that 
\(
z \in K + w \subseteq K
\)
and
\[
K + z \subseteq K + w \subseteq K + \left( K \cap (K+x) \cap (K+y)\right)  \subseteq K \cap (K+x) \cap (K+y).
\]

For $t \ge 3$, the result follows by induction.
\qed

\end{document}